\newcommand{\bt}{\begin{theo}}
\newcommand{\et}{\end{theo}}
\newcommand{\bl}{\begin{lem}}
\newcommand{\el}{\end{lem}}
\newcommand{\bc}{\begin{cor}}
\newcommand{\ec}{\end{cor}}
\newcommand{\bex}{\begin{ex}}
\newcommand{\eex}{\end{ex}}
\newcommand{\bp}{\begin{prop}}
\newcommand{\ep}{\end{prop}}
\newcommand{\bob}{\begin{rk}}
\newcommand{\eob}{\end{rk}}
\newcommand{\bdefn}{\begin{de}}
\newcommand{\edefn}{\end{de}}
\newcommand{\be}{\begin{equation}}
\newcommand{\ee}{\end{equation}}
\newcommand{\bea}{\begin{eqnarray}}
\newcommand{\eea}{\end{eqnarray}}
\newcommand{\bqa}{\begin{eqnarray*}}
\newcommand{\eqa}{\end{eqnarray*}}
\newcommand{\ba}{\begin{array}}
\newcommand{\ea}{\end{array}}
\newcommand{\bq}{\begin{quote}}
\newcommand{\eq}{\end{quote}}
\newcommand{\bi}{\begin{itemize}}
\newcommand{\ei}{\end{itemize}}
\newcommand{\ben}{\begin{enumerate}}
\newcommand{\een}{\end{enumerate}}
\newcommand{\bfl}{\begin{flushleft}}
\newcommand{\efl}{\end{flushleft}}
\newcommand{\bfr}{\begin{flushright}}
\newcommand{\efr}{\end{flushright}}
\newcommand{\bce}{\begin{center}}
\newcommand{\ece}{\end{center}}
\newtheorem{teo}{\sc Theorem}[section]
\newtheorem{lem}{\sc Lemma}[section]
\newcommand{\CC}{\mathbb{C}}
\newcommand{\ZZ}{\mathbb{Z}}
\newcommand{\modua}[2]{1-|{#1}|^{#2}}
\newcommand{\difztzg}{\zt-\zeta}
\newcommand{\difzgz}{\zeta-z}
\newcommand{\difztz}{\zt-z}
\newcommand{\difzgzt}{\zeta-\zt}
\newcommand{\difmodu}[4]{|{#1}|^{#2}-|{#3}|^{#4}}
\newcommand{\sumzgz}{\zeta+z}
\newcommand{\DD}{\mathbb{D}}
\newcommand{\dD}{\partial\DD}
\newcommand{\zbl}{\overline{{z}}} 
\newcommand{\zbg}{\overline {\rm\zeta}} 
\newcommand{\zt}{\tilde{\zeta}} 
\newcommand{\zbt}{\overline{\tilde{\zeta}}} 
\newcommand{\xzb}{\tilde{\xi}} 
\newcommand{\gam}[1]{\gamma_{#1}} 
\newcommand{\chupa}[1]{\displaystyle \overline {#1}}
\newcommand{\etat}{\overline{\tilde{\eta}}}
\newcommand{\modu}[1]{|{#1}|}
\newcommand{\anal}[1]{\partial^{#1}_{\overline{\rm z}}} 
\newcommand{\opneu}{\partial_{\mu}} 
\newcommand{\ibzg}[1]{\frac{1}{2\pi i}\int_{\mid \zeta \mid=1}{#1}\; \frac{d\zeta}{\zeta}} 
\newcommand{\iintzgDOS}[1]{\frac{1}{2\pi}\int_{\mid\zeta\mid < 1}{#1}\; d\xi d\eta} 
\newcommand{\iintzgNOR}[1]{\frac{1}{\pi}\int_{\mid\zeta\mid < 1}{#1}\; d\xi d\eta} 
\newcommand{\ibzgwz}[1]{\frac{1}{2\pi i}\int_{\mid \zeta \mid=1}{#1}\; d\zeta} 
\newcommand{\ibzgcuatro}[1]{\frac{1}{4\pi i}\int_{\mid \zeta \mid=1}{#1}\; \frac{d\zeta}{\zeta}} 
\newcommand{\ibzgwzb}[1]{\frac{1}{2\pi i}\int_{\mid \zeta \mid=1}{#1}\; d\zbg}
\newcommand{\ibzt}[1]{\frac{1}{2\pi i}\int_{\mid \zt \mid=1}{#1}\;\frac{d\zt}{\zt}} 
\newcommand{\iintztNOR}[1]{\frac{1}{\pi}\int_{\mid\zt\mid < 1}{#1}\; d\xzb d\tilde{\eta}} 
\newcommand{\ibztwz}[1]{\frac{1}{2\pi i}\int_{\mid \zt \mid=1}{#1}\;d\zt} 
\newcommand{\logzzbg}{\log(1-z\zbg)}
\newcommand{\usualker}{\frac{1-\modu{\zeta} ^{2}}{1-\zbl\zeta}}
\begin{document}

\title{\vspace{-1in}\parbox{\linewidth}{\footnotesize\noindent}
 \vspace{\bigskipamount} \\
Combined boundary value problems for the nonhomogeneous tri-analytic equation
\thanks{ {\em 2010 AMS Subject Classifications: 35G15, 35C15, 35N25} 
\hfil\break\indent
{\em Keywords: Boundary value problems, tri-analytic equations, solvability conditions. 
}
\hfil\break\indent
{\em $\dagger$: aditeodoro@usb.ve ~~ $\ddagger$: cvanegas@usb.ve}}}

\author{ Antonio N. Di Teodoro$\dagger$ and Carmen J. Vanegas$\ddagger$ \\
Departamento de Matem\'aticas Puras y Aplicadas \\
Universidad Sim\'{o}n Bol\'{\i}var, Caracas 1080-A, Venezuela}

\date{}
\maketitle

\begin{abstract}
In the present article we present a particular combination of
boundary problems for the inhomogeneous tri-analytic
equation: the Neumann-(Dirichlet-Neuman) problem and the
(Dirichlet-Neumann)-Dirichlet problem.
In order to obtain the solution and solvability conditions we use
an iteration's process involving those corresponding to equations of lower order.
\end{abstract}

\section{Introduction}

The basic boundary value problems in complex analysis, the Schwarz, 
the Dirichlet and the Neumann problems have been
studied for higher order complex partial differential equations.
All kind of combinations of their could be posed which yields a
large variety of different problems. However, not all of these
problems are well-posed problems. Therefore we have to look for
solvability conditions.

Integral representations for solutions to higher order partial
differential equations can be obtained by an iteration's process from
the representation integral formulas for those corresponding to the first order
equations. This method has been used many times, instead see \cite{h2,h3,h4,h5} and references therein.
In this paper we apply also this procedure. Although this method
can be used in regular domains, we will restrict to the unit disc
in order to obtain explicit solvability conditions and solutions
for the problems treated here.

In this article we will study some new boundary value problems by
combining different boundary conditions.
We limited our study to the inhomogeneous tri-analytic equation
and let to a future work the generalization of these combined problems
for the inhomogeneous poly-analytic equation.

Next, we present basic problems which have been proved in \cite{h3}
\begin{teo}\label{teo12}

The Dirichlet problem for the inhomogeneous Cauchy-Riemann
equation in the unit disc
\begin{equation*}
\anal {} \omega= f\,in\,\DD, \quad \omega=\gamma\, on\,\DD
\end{equation*}
for $f \in L_{1}(\DD, \CC), \; \gamma \in C(\dD,\CC)$
is solvable if and only if for  $\modu{z} <1$ \\
\begin{equation}\label{eq121}
\ibzgwz {\frac{\zbl\gamma(\zeta)}{1-\zbl\zeta}}-\iintzgNOR
{\zbl\frac{f(\zeta)}{1-\zbl\zeta}}=0.
\end{equation}
The solution then is uniquely given by
\begin{equation}\label{eq122}
\omega(z)=\ibzgwz {\frac{\gamma(\zeta)}{\zeta-z}}- \iintzgNOR
{\frac{f(\zeta)}{\zeta-z}}.
\end{equation}
\end{teo}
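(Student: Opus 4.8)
The plan is to reduce everything to the Cauchy--Pompeiu representation formula, which for $\omega\in C^{1}(\overline{\mathbb{D}})$ and $z\in\mathbb{D}$ reads
\[
\omega(z)=\frac{1}{2\pi i}\int_{\partial\mathbb{D}}\frac{\omega(\zeta)}{\zeta-z}\,d\zeta-\frac{1}{\pi}\int_{\mathbb{D}}\frac{\partial_{\overline{\zeta}}\omega(\zeta)}{\zeta-z}\,d\xi\,d\eta,
\]
its right-hand side being zero when the evaluation point lies outside $\overline{\mathbb{D}}$. First I would treat necessity of the solution form together with uniqueness: if $\omega$ solves the problem, then inserting $\partial_{\overline\zeta}\omega=f$ in $\mathbb{D}$ and $\omega=\gamma$ on $\partial\mathbb{D}$ into the formula produces \eqref{eq122} verbatim, and since its right-hand side depends only on the data $f,\gamma$, the solution is unique.

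To obtain the solvability condition \eqref{eq121} I would reflect the evaluation point across the circle. For $z\in\mathbb{D}$ the reflected point $1/\overline z$ lies outside $\overline{\mathbb{D}}$, so the representation evaluated there has vanishing left-hand side; using the kernel identity $\frac{1}{\zeta-1/\overline z}=-\frac{\overline z}{1-\overline z\zeta}$ and again substituting the data turns this vanishing into exactly \eqref{eq121}. Thus \eqref{eq121} is necessary.

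For sufficiency I would run the argument backwards. Writing the general solution of the equation as $\omega=\phi+Pf$, where $Pf(z)=-\frac1\pi\int_{\mathbb{D}}\frac{f(\zeta)}{\zeta-z}\,d\xi\,d\eta$ and $\phi$ is an arbitrary holomorphic function, the equation is automatic because $\partial_{\overline z}Pf=f$ and $\partial_{\overline z}\phi=0$; the boundary condition then becomes the demand that $g:=\gamma-(Pf)|_{\partial\mathbb{D}}$ be the boundary value of a holomorphic function. This holomorphic extendability is equivalent to the vanishing of all moments $\int_{\partial\mathbb{D}}g(\zeta)\zeta^{n}\,d\zeta=0$ for $n\ge0$, i.e. to $\frac{1}{2\pi i}\int_{\partial\mathbb{D}}\frac{\overline z\,g(\zeta)}{1-\overline z\zeta}\,d\zeta=0$ for all $|z|<1$. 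Applying the reflected-point Cauchy--Pompeiu formula to $Pf$ itself identifies $\frac{1}{2\pi i}\int_{\partial\mathbb{D}}\frac{\overline z\,(Pf)(\zeta)}{1-\overline z\zeta}\,d\zeta$ with $\frac1\pi\int_{\mathbb{D}}\frac{\overline z\,f(\zeta)}{1-\overline z\zeta}\,d\xi\,d\eta$, so this extendability condition is precisely \eqref{eq121}; when it holds the required $\phi$ reduces to the Cauchy integral of $\gamma$ (the Cauchy integral of $(Pf)|_{\partial\mathbb{D}}$ vanishing identically by the same representation), and $\omega=\phi+Pf$ collapses to \eqref{eq122}.

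The step I expect to be the main obstacle is the boundary attainment under the stated low regularity ($f\in L_{1}(\mathbb{D})$, $\gamma\in C(\partial\mathbb{D})$): one must justify that $\omega$ defined by \eqref{eq122} genuinely attains the continuous boundary value $\gamma$, which requires the Sokhotski--Plemelj jump behaviour of the Cauchy integral together with continuity of the Pompeiu term up to $\partial\mathbb{D}$, and a limiting or approximation argument to license the use of Cauchy--Pompeiu and of Fubini for merely integrable $f$. The kernel algebra (the identity $\frac{1}{\zeta-1/\overline z}=-\frac{\overline z}{1-\overline z\zeta}$ and the interchanges of integration) is routine; the analytic care at the boundary is where the real work sits.
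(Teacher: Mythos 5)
This theorem is one of the preliminaries the paper quotes from \cite{h3} without proof, so there is no in-paper argument to compare against; your proposal is correct and follows essentially the same route as the standard proof in that reference. Namely, the Cauchy--Pompeiu formula yields \eqref{eq122} and uniqueness, evaluation at the reflected point $1/\zbl$ (equivalently, your moment/holomorphic-extendability criterion) yields \eqref{eq121}, and the real technical content is exactly the boundary-attainment issue you flag for $f \in L_{1}(\DD,\CC)$ --- which in \cite{h3} is handled by adding the left-hand side of \eqref{eq121} to \eqref{eq122}, turning the Cauchy kernel into the Poisson kernel $\dfrac{1-\modu{z}^{2}}{(\difzgz)(1-\zbl\zeta)}$ so that the boundary values of $\gamma$ are attained and the area term vanishes on $\dD$.
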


\begin{teo}\label{teo13}
The Neumann problem for the inhomogeneous Cauchy-Riemann equation
in the unit disc
\begin{equation*}
\anal {} \omega =f\, in\, \DD ,\quad \opneu {\omega} =\gamma\,
on\,\dD,\quad \omega(0)= c
\end{equation*}
for $f \in C^{\alpha}(\chupa{\DD}, \CC),\;0<\alpha<1 \; \gamma\in
C(\dD,\CC),\; c\in \CC$
is solvable if and only if for $\modu {z} <1$ 
\begin{equation}\label{eq131}
\ibzg {\frac{\gamma(\zeta)}{1-\zbl\zeta}}+\ibzgwzb
{\frac{f(\zeta)}{1-\zbl\zeta}}+\zbl\iintzgNOR
{\frac{f(\zeta)}{(1-\zbl\zeta)^{2}}}=0.
\end{equation}
The solution then is uniquely given by
\begin{eqnarray}\label{eq132}
\omega(z)=c-\ibzg {\gamma (\zeta)\logzzbg}-\ibzg {\zbg f(\zeta) \logzzbg} \nonumber \\
-z\iintzgNOR {\frac{f(\zeta)}{\zeta(\zeta-z)}}.
\end{eqnarray}
\end{teo}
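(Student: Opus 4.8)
The plan is to show that the function defined by (\ref{eq132}) is the unique solution, with (\ref{eq131}) arising as the compatibility condition that must hold for such a solution to exist. First I would record the two facts that hold unconditionally. Setting $z=0$ annihilates every logarithmic term, since $\log(1-0\cdot\bar\zeta)=0$, while the last integral carries an explicit factor $z$; hence $\omega(0)=c$. Next, because each kernel $\log(1-z\bar\zeta)$ is holomorphic in $z$ for $|z|<1,\ |\zeta|=1$, the first two integrals in (\ref{eq132}) are killed by $\partial_{\bar z}$, so that $\partial_{\bar z}\omega=-z\,\partial_{\bar z}\left[\frac{1}{\pi}\int_{\mathbb{D}}\frac{f(\zeta)}{\zeta(\zeta-z)}\,d\xi\,d\eta\right]$; using $\partial_{\bar z}(\zeta-z)^{-1}=-\pi\delta(\zeta-z)$ the area integral picks out $\zeta=z$ and this reduces to $f(z)$, so the differential equation holds for every $f,\gamma$.

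The heart of the matter is the Neumann condition. On $\partial\mathbb{D}$ the normal derivative is $\partial_\mu=\zeta\partial_z+\bar\zeta\partial_{\bar z}$, so that, using $\partial_{\bar z}\omega=f$, the requirement $\partial_\mu\omega=\gamma$ is equivalent to prescribing the boundary values $\omega_z=\bar\zeta\gamma-\bar\zeta^{2}f$ on $\partial\mathbb{D}$. I would therefore split $\omega=\varphi+Tf$, where $Tf(z)=-\frac{1}{\pi}\int_{\mathbb{D}}\frac{f(\zeta)}{\zeta-z}\,d\xi\,d\eta$ is the Pompeiu transform, of class $C^{1,\alpha}$ precisely because $f\in C^{\alpha}$ (this is what the H\"older hypothesis buys us, ensuring $\omega_z$ has boundary values), and $\varphi$ is holomorphic. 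The boundary relation above then determines the boundary values of $\varphi'=\omega_z-(Tf)_z$. Since $\varphi$ is holomorphic, these boundary values are not free: their anti-holomorphic spectral component must vanish. Writing that projection as a Cauchy integral against the anti-holomorphic kernel $(1-\bar z\zeta)^{-1}$ and inserting $\bar\zeta=\zeta^{-1}$ on $\partial\mathbb{D}$ produces the first two terms of (\ref{eq131}) from $\bar\zeta\gamma$ and $\bar\zeta^{2}f$, while the area term with $(1-\bar z\zeta)^{-2}$ emerges from the contribution $(Tf)_z$ after an integration by parts. Correctly identifying this projection and simplifying the resulting singular boundary integrals is the step I expect to be the main obstacle.

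Once (\ref{eq131}) is in force, the holomorphic primitive $\varphi$ exists; integrating $\varphi'$ against the logarithmic kernel $\log(1-z\bar\zeta)$, which is the primitive of the Cauchy kernel after the substitution $\bar\zeta=\zeta^{-1}$, and fixing the integration constant by $\omega(0)=c$, reassembles exactly formula (\ref{eq132}). Finally, for uniqueness I would set $u=\omega_1-\omega_2$ for two solutions; then $u$ is holomorphic, $u(0)=0$, and on $\partial\mathbb{D}$ one has $\partial_\mu u=\zeta u'=0$, so $u'$ vanishes on the boundary. Being holomorphic and continuous up to the boundary, $u'$ then vanishes identically, whence $u$ is constant and $u(0)=0$ forces $u\equiv0$. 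This yields both existence, under the compatibility condition (\ref{eq131}), and uniqueness of the representation (\ref{eq132}).
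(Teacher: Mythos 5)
You follow the standard route for this theorem (which, note, this paper never proves itself: Theorem \ref{teo13} is quoted from \cite{h3}, so there is no internal proof to compare with): split $\omega=\varphi+Tf$ with $Tf$ the Pompeiu transform, convert the Neumann condition into prescribed boundary values $\varphi'=\bar\zeta\gamma-\bar\zeta^{2}f-(Tf)_{z}$ on $|\zeta|=1$, demand that the negative Fourier modes of this boundary function vanish, and integrate back against $\log(1-z\bar\zeta)$. Your verifications of $\partial_{\bar z}\omega=f$, of $\omega(0)=c$, and the uniqueness argument are sound. The genuine gap sits exactly at the step you defer as ``the main obstacle'': the contribution of $(Tf)_{z}$ to the projection. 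It is not true that $(Tf)_{z}$ contributes only the area term. Using $d(Tf)=(Tf)_{z}\,d\zeta+f\,d\bar\zeta$ on $|\zeta|=1$, integrating by parts, and applying the Cauchy--Pompeiu formula at the exterior point $1/\bar z$, one gets
\begin{equation*}
-\frac{1}{2\pi i}\int_{|\zeta|=1}\frac{(Tf)_{z}(\zeta)}{1-\bar z\zeta}\,d\zeta
=\frac{\bar z}{\pi}\int_{|\zeta|<1}\frac{f(\zeta)}{(1-\bar z\zeta)^{2}}\,d\xi d\eta
+\frac{1}{2\pi i}\int_{|\zeta|=1}\frac{f(\zeta)}{1-\bar z\zeta}\,d\bar\zeta ,
\end{equation*}
i.e.\ the area term \emph{plus a second copy} of the boundary term you already extracted from $-\bar\zeta^{2}f$. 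Consequently the compatibility condition your construction actually yields is (\ref{eq131}) with its middle term doubled (coefficient $\frac{1}{\pi i}$ instead of $\frac{1}{2\pi i}$), and the same bookkeeping in the reconstruction step yields the term $+\frac{1}{\pi i}\int_{|\zeta|=1}\bar\zeta f(\zeta)\log(1-z\bar\zeta)\frac{d\zeta}{\zeta}$ where (\ref{eq132}) displays $-\frac{1}{2\pi i}\int_{|\zeta|=1}\bar\zeta f(\zeta)\log(1-z\bar\zeta)\frac{d\zeta}{\zeta}$.

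This is not a cosmetic difference, because the formulas as printed fail on simple data, so a correct execution of your plan cannot ``reassemble exactly'' them. Take $f\equiv1$, $\gamma(\zeta)=\bar\zeta$, $c=0$: then $\omega(z)=\bar z$ solves the problem, yet the left-hand side of (\ref{eq131}) evaluates to $\bar z-\bar z+\bar z=\bar z\neq0$. Take $f(z)=z^{2}$, $\gamma(\zeta)=3\zeta$, $c=0$: the unique solution is $z^{2}\bar z$, while (\ref{eq132}) evaluates to $3z+z^{2}\bar z$, whose normal derivative on $|z|=1$ is $6z$ rather than the required $3z$. Both examples are consistent with the doubled $f$-terms, which is what one finds in the source \cite{h3}; the statement reproduced here is evidently mis-transcribed. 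So the honest conclusion of your argument, done correctly, is a corrected version of the theorem together with a detection of the misprint; as written, your proposal instead asserts agreement with (\ref{eq131})--(\ref{eq132}), and that assertion rests precisely on the boundary term dropped above.
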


\vspace{0.5cm}
\begin{teo}\label{teo14}
The Dirichlet-Neumann problem for the in\-ho\-mogeneous Bit\-sadze
equation in the unit disc
\begin{equation*}
\anal {2} \omega =f\, on\,\DD,\quad \omega = \gam {0}\, in\, \dD,
\quad \opneu {\anal {} \omega} =\gam {1}\, on\, \dD,\quad \anal {}
{\omega} (0)=c
\end{equation*}
is uniquely solvable for $f \in L_{1}(\DD, \CC)\cap C(\dD, \CC),
\; \gam {0},\,\gam {1} \in C(\dD,\CC),\; c\in \CC$ if and only if
\begin{equation}\label{eq141}
c -\ibzgwz {\frac{\gam {0}(\zeta)}{1-\zbl\zeta}} + \iintzgNOR
{\usualker\frac{f(\zeta)}{\zeta}}=0.
\end{equation}
and
\begin{equation}\label{eq142}
\ibzg{(\gam {1}(\zeta)-\zbg
f(\zeta))\frac{1}{(1-\zbl\zeta)}}+\iintzgNOR {\frac{\zbl
f(\zeta)}{(1-\zbl\zeta)^{2}}}=0.
\end{equation}
The solution then is given by
\begin{multline}\label{eq143}
\omega(z)=c\zbl+\ibzgwz {\frac {\gam {0} (\zeta)}{\zeta -z}} +
\ibzg { [\gam {1}(\zeta) -\zbg f(\zeta)]\logzzbg \frac{1-\mid z
\mid^{2}}{z}} \\
\\
+ \iintzgNOR {\frac{\modu{\zeta}^{2}-\modu{z}^{2}}{(\zeta-z)} \frac{f(\zeta)}{\zeta}}.
\end{multline}
\end{teo}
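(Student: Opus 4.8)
The plan is to reduce the Bitsadze problem to the two first-order Cauchy--Riemann problems of Theorems \ref{teo12} and \ref{teo13} by the iteration scheme announced in the introduction, writing $\anal{2}\omega = f$ as $\anal{}(\anal{}\omega) = f$.

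First I would set $v := \anal{}\omega$. With this substitution the data become
\[
\anal{} v = f \ \text{in}\ \DD, \qquad \opneu{v} = \gam{1}\ \text{on}\ \dD, \qquad v(0) = c,
\]
which is exactly the Neumann problem for the inhomogeneous Cauchy--Riemann equation of Theorem \ref{teo13}, with $\gamma$ replaced by $\gam{1}$. Thus $v$ exists and is unique if and only if the solvability condition \eqref{eq131} (with $\gam{1}$) holds, and I would rewrite that condition in the announced form \eqref{eq142}. The only manipulation needed is the identity
\[
\ibzgwzb{\frac{f(\zeta)}{1-\zbl\zeta}} = \ibzg{\frac{-\zbg f(\zeta)}{1-\zbl\zeta}},
\]
which follows on $\modu{\zeta}=1$ from $\zbg = 1/\zeta$ and $d\zbg = -\zeta^{-2}\,d\zeta$; after it the two boundary terms of \eqref{eq131} combine into $\ibzg{(\gam{1}(\zeta)-\zbg f(\zeta))\frac{1}{1-\zbl\zeta}}$, giving \eqref{eq142}. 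With this condition in force, Theorem \ref{teo13} provides the explicit representation \eqref{eq132} for $v$.

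Next, $\omega$ solves the Dirichlet problem
\[
\anal{}\omega = v \ \text{in}\ \DD, \qquad \omega = \gam{0}\ \text{on}\ \dD,
\]
which is Theorem \ref{teo12} with right-hand side $v$ and datum $\gam{0}$. Substituting the formula for $v$ into the Dirichlet solvability condition \eqref{eq121} yields, after simplification, the first stated condition \eqref{eq141}, and substituting it into the solution formula \eqref{eq122} yields \eqref{eq143}. Uniqueness of $\omega$ follows from the uniqueness asserted in each of the two steps.

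The hard part is the two substitutions, since $v$ is itself a sum of two boundary integrals in $\zeta$ and an area integral in $\zeta$. Inserting $v(\zeta)$ into the area integrals of \eqref{eq121} and \eqref{eq122} creates iterated integrals: an area integral of a boundary integral and an area integral of an area integral. I would justify interchanging the order of integration by Fubini --- the hypotheses $f\in L_{1}(\DD)\cap C(\dD)$ and $\gam{0},\gam{1}\in C(\dD)$ supply the needed integrability --- and then evaluate the inner $\zeta$-integrals, which are of Cauchy and Pompeiu type, by the Cauchy integral formula and residues. The boundary part of $v$ thereby produces the logarithmic kernel $\logzzbg$ together with the factor $\frac{1-\modu{z}^{2}}{z}$ of \eqref{eq143}, while the area part generates the Green-type kernel $\frac{\modu{\zeta}^{2}-\modu{z}^{2}}{\zeta-z}$. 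The constant part $c=\anal{}\omega(0)$ of $v$ is handled by the two elementary evaluations $\iintzgNOR{\frac{1}{\zeta-z}}=-\zbl$ and $\iintzgNOR{\frac{1}{1-\zbl\zeta}}=1$, which account respectively for the term $c\zbl$ in \eqref{eq143} and for the bare $c$ in \eqref{eq141}; after factoring the common $\zbl$ from \eqref{eq121} (legitimate for $z\neq 0$ and extended by continuity) the condition reduces to \eqref{eq141}. The principal obstacle is therefore the careful bookkeeping of these reductions rather than any conceptual difficulty.
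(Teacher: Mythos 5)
The paper itself never proves this theorem: it is quoted in the introduction as a known result from \cite{h3}, so there is no internal proof to compare against; your proposal is correct, and it is precisely the iteration scheme that both the cited source and this paper (for its own new combined problems in Sections 2 and 3) rely on. Concretely, your reduction works: setting $v=\anal{}\omega$ gives the Neumann problem of Theorem \ref{teo13}, your conversion $d\zbg=-\zbg^{2}\,d\zeta$ on $\dD$ correctly turns \eqref{eq131} into \eqref{eq142}, and the subsequent Dirichlet step via Theorem \ref{teo12} with right-hand side $v$, handled by Fubini plus Cauchy--Pompeiu evaluations, does produce \eqref{eq141} and \eqref{eq143}; in particular your key identities $\iintzgNOR{\frac{1}{\zeta-z}}=-\zbl$ and $\iintzgNOR{\frac{1}{1-\zbl\zeta}}=1$, the vanishing of the $\gam{1}$-contribution in \eqref{eq141}, and the claimed log and Green-type kernels all check out. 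One caveat worth recording: Theorem \ref{teo13} as stated requires $f\in C^{\alpha}(\overline{\DD},\CC)$, while the theorem you are proving assumes only $f\in L_{1}(\DD,\CC)\cap C(\dD,\CC)$, so a black-box application of Theorem \ref{teo13} establishes the result under a stronger regularity hypothesis than stated --- a mismatch inherited from the paper's own formulations rather than a flaw in your reduction.
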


The Dirichlet problem for the inhomogeneous
poly-analytic equation is proved in \cite{h3}. For the same
equation the Neumann and Dirichlet-Neumann problem are proved in
\cite{h5} and \cite{h4} respectively. In turn the Dirichlet-Neumann problem for the inhomogeneous
poly-analytic equation is solved in \cite{h4}. 
However we observe that the cases which are
considered in this paper can not be obtained direct for the
formula found there. 
  
In order to establish the new combined problems we need
some identities which we will prove using classical
results of complex analysis as Gauss'theorem, Cauchy's theorem and
Cauchy-Pompeiu Formula \cite{h6}.

\section{The Neumann-(Dirichlet-Neumann) problem}

In order to solve the boundary problem we need the following lemma.
\begin{lem}\label{lema31}
For $\displaystyle \modu{z}<1$ and $\displaystyle \modu{\zt}<1$ we
have:
\begin{description}
\item[$i.$] $\displaystyle\iintzgNOR {\frac{\zbg\zbl}{(1-\zbl\zeta)^{2}}} =\zbl^{2}.$
\item[$ii.$] $\displaystyle\iintzgNOR {\frac{\zbl}{(\zt-\zeta)(1-\zbl\zeta)^{2}}}=\frac{\zbl\zbt-2\zbl^{2}+\zbl^{3}\zt}{(1-\zbl\zt)^{2}}.$
\item[$iii.$] $\displaystyle\iintzgNOR {\frac{1-\modu{\zeta}^{2}}{\zeta}\log(1-\zeta\zbt)\frac{\zbl}{(1-\zbl\zeta)^2}}=-\frac{\zbl\zbt}{2}.$
\item[$iv.$] $\displaystyle\iintzgNOR {\frac{\modu{\zt}^{2}-\modu{\zeta}^{2}}{(\zt-\zeta)}\frac{\zbl}{(1-\zbl\zeta)^{2}}}
=\frac{\zbl\modu{\zt}^{2}(2\zt-4\zbl+2\zt\zbl^{2}-\zbt)+2\zbl^{2}}{2(1-\zbl\zt)^{2}}.$
\item[$v.$]  $\displaystyle\iintzgNOR {\frac{\zbg z}{\zeta(\zeta -z)}} =-\frac{\zbl^{2}}{2}.$
\item[$vi.$] $\displaystyle\iintzgNOR {\frac{z}{(\zt-\zeta)\zeta(\zeta-z)}}=\frac{\zbt-\zbl}{\zt-z}-\frac{\zbt}{\zt}.$
\item[$vii.$] $\displaystyle 
\iintzgDOS{\frac{\sumzgz}{(\difzgz)\zeta}\frac{\modua{\zeta}{2}}{\zeta}\log(1-\zeta\zbt)} \\
~~~~~~~~~~~~~~~~~~~~~~~~~~~~~~~~~~~~~~~~~~~ =\frac{\modu{z}^{4}-2\modu{z}^{2}+1}{2z^{2}}\log(1-z\zbt)+ 
\frac{\zbt}{2z}+ \frac{\zbt^{2}}{8}. $
\item[$viii.$] $\displaystyle\iintzgNOR {\frac{(\modu{\zt}^{2}-\modu{\zeta}^{2})z}{(\zt-\zeta)\zeta(\zeta -z)}}=\modu{\zt}^{2}\left[ \frac{\zbt-\zbl}{\zt-\zeta}-\frac{\zbt}{\zt}\right]-\frac{z(\zbt^{2}-\zbl^{2})}{2(\zt-z)}.$
\item[$ix.$] $\displaystyle 
\iintzgNOR{\frac{1-\modu{\zeta}^{2}}{\zeta}\log(1-\zeta\zbt)\frac{z}{\zeta(\zeta-z)}} \\
~~~~~~~~~~~~~~~~~~~~~~~~~~~~~~~~~~~~~~~~~~~ = \frac{\zbt}{2}+\frac{3\zbt^{2}}{4}+ \frac{(1-2\modu{z}^{2}+ \modu{z}^{4})\log(1-z\zbt)}{2z^{2}}.$
\end{description}
\end{lem}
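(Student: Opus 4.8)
\pr The plan is to reduce all nine identities to a single contour computation. Each integrand is first written as $\partial_{\zbg}F$ for an explicit $F$; the complex Gauss theorem
\[
\frac{1}{\pi}\iintzggren{\partial_{\zbg}F}=\ibzgwz{F}
\]
then turns the area integral into a boundary integral, on which the relation $\zbg=1/\zeta$ (valid for $\modu{\zeta}=1$) reduces the integrand to a rational function of $\zeta$, possibly times $\log(1-\zeta\zbt)$. The value is finally read off as a sum of residues at the interior poles $\zeta=0,z,\zt$, the exterior pole $\zeta=1/\zbl$ being discarded because $\modu{z}<1$. Whenever a Cauchy kernel $1/(\zeta-z)$ or $1/(\zt-\zeta)$ is present I would replace the Gauss step by the Cauchy--Pompeiu formula \cite{h6}, which performs the same reduction and additionally produces the point value of the antiderivative at the base point of the kernel.

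Constructing $F$ is mechanical. Every factor occurring in the integrands is holomorphic in $\zeta$ on $\DD$ except for explicit powers of $\zbg$ and factors $1/\zeta$; writing $1-\modu{\zeta}^{2}=1-\zeta\zbg$ and collecting the $\zeta$-holomorphic part $h$ (which may carry $\log(1-\zeta\zbt)$), one has $\partial_{\zbg}(\zbg h)=h$ and $\partial_{\zbg}(\frac{1}{2}\zbg^{2}h)=\zbg h$, which supplies $F$ at once. For the items built from simple poles --- $v$, $vi$, $viii$, $ix$ --- it is cleaner to first apply partial fractions such as $\frac{z}{\zeta(\zeta-z)}=\frac{1}{\zeta-z}-\frac{1}{\zeta}$ and use the elementary value $\frac{1}{\pi}\int_{\modu{\zeta}<1}\frac{d\xi\,d\eta}{\zeta-p}=-\overline{p}$ (for $\modu{p}<1$) together with its $\zbg$-weighted analogues, all obtained from Cauchy--Pompeiu by the same recipe. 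The mixed combinations $\frac{\modu{\zt}^{2}-\modu{\zeta}^{2}}{\zt-\zeta}$ appearing in $iv$, $vii$ and $viii$ are linearized beforehand through
\[
\frac{\modu{\zt}^{2}-\modu{\zeta}^{2}}{\zt-\zeta}=\zbt+\frac{\zeta(\zbt-\zbg)}{\zt-\zeta},
\]
which isolates a regular term from a kernel term carrying explicit $\zbg$ dependence, after which the scheme applies term by term.

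As a model case, in $i$ one takes $F=\frac{\zbg^{2}\zbl}{2(1-\zbl\zeta)^{2}}$; on $\dD$ the substitution $\zbg=1/\zeta$ gives $\ibzgwz{\frac{\zbl}{2\zeta^{2}(1-\zbl\zeta)^{2}}}$, whose residue at $\zeta=0$ equals $\zbl^{2}$. In $ii$, writing the kernel $1/(\zt-\zeta)$ and taking $w=-\zbg\,\zbl(1-\zbl\zeta)^{-2}$ in Cauchy--Pompeiu produces a boundary integral with simple poles at $\zeta=0$ and $\zeta=\zt$ plus the base-point value $-w(\zt)$; summing the two residues and this value and clearing the denominator $\zt(1-\zbl\zt)^{2}$ yields the stated expression. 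The logarithmic items $iii$, $vii$, $ix$ are handled the same way, the only extra ingredient being the expansion $\log(1-\zeta\zbt)=-\sum_{k\ge1}(\zeta\zbt)^{k}/k$ used when extracting the residue at $\zeta=0$.

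The difficulty is purely computational. In $iii$, $vii$ and $ix$ the boundary integrand is a Cauchy product of the series for $\log(1-\zeta\zbt)$ with that of $(1-\zbl\zeta)^{-2}$ or $(\zeta-z)^{-1}$, so the residue at $\zeta=0$ demands careful bookkeeping of that product up to the required order; in $iv$ and $viii$ one must track several simple poles simultaneously and then collect the rational pieces over the common denominators $(1-\zbl\zt)^{2}$ or $\zt-z$. Each of the nine closed forms then emerges after routine simplification of the sum of residues. \cuad
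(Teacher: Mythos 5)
Your proposal is correct, and its skeleton coincides with the paper's: after the same algebraic preprocessing (writing $1-\modu{\zeta}^{2}=1-\zeta\zbg$, partial fractions such as $\frac{z}{\zeta(\zeta-z)}=\frac{1}{\zeta-z}-\frac{1}{\zeta}$), each area integral is converted by the complex Gauss theorem, or by Cauchy--Pompeiu whenever a Cauchy kernel is present, into a boundary integral plus a point value; your model computations for $(i)$ and $(ii)$ reproduce the paper's answers exactly. The genuine difference is in how the resulting boundary integrals are evaluated. You set $\zbg=1/\zeta$ on $\dD$ and sum residues at the interior poles $\zeta=0,\,z,\,\zt$, discarding the exterior pole $\zeta=1/\zbl$. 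The paper instead evaluates them via the Cauchy integral formula, often after rewriting them as conjugates of Cauchy integrals (items $(i)$, $(ii)$, $(iv)$), and, for the holomorphic boundary terms $\psi$ and $\Psi$ arising in items $(v)$ and $(vii)$, it computes every Taylor coefficient $\Psi^{(k)}(0)$ by parametrizing $\zeta=e^{i\theta}$, integrating the series for $\log(1-\zeta\zbt)$ term by term, and then resumming $-\sum_{k\geq 0}\frac{\zbt^{k+2}}{k+2}z^{k}$ back to $\frac{\log(1-z\zbt)}{z^{2}}+\frac{\zbt}{z}$. Your residue route shortcuts precisely this most laborious step: since $\log(1-\zeta\zbt)$ is holomorphic on a neighbourhood of $\overline{\DD}$ and vanishes at the origin, only its first Taylor coefficient enters the residue of $\frac{\log(1-\zeta\zbt)}{\zeta^{2}(\zeta-z)}$ at $\zeta=0$, so $\Psi(z)$ falls out in one line with no resummation; the same remark streamlines $(iii)$ and $(ix)$. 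What the paper's longer route buys is that it stays entirely within its announced toolkit (Gauss, Cauchy's theorem, Cauchy--Pompeiu) and uniform convergence, whereas you invoke the residue theorem; the two are equivalent, and yours gives the more uniform and less error-prone bookkeeping. One small slip: the linearization $\frac{\modu{\zt}^{2}-\modu{\zeta}^{2}}{\zt-\zeta}=\zbt+\frac{\zeta(\zbt-\zbg)}{\zt-\zeta}$ is needed only in $(iv)$ and $(viii)$; item $(vii)$ contains no such quotient and is handled, as you say elsewhere, by $1-\modu{\zeta}^{2}=1-\zeta\zbg$ alone.
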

\begin{proof}
\begin{description}
\item[$i.$] Using the Green-Gauss theorem we have
\begin{multline*}
\zbl\iintzgNOR {\frac{\zbg}{(1-\zbl\zeta)^{2}}}=\zbl\ibzgwz{\frac{\zbg^{2}}{2(1-\zbl\zeta)^{2}}}
=\zbl\ibzgwzb{\frac{-\zbg^{2}}{2(\zbg-\zbl)^{2}}} \\
\\
=\frac{\zbl}{2}\left[\overline{\ibzgwz{\frac{\zeta^{2}}{(\zeta-z)^{2}}}}\right]=
\frac{\zbl}{2}\overline{(\zeta^{2})'_{\zeta}\vert_{\zeta=z}}=\zbl^{2}.
\end{multline*}
\item[$ii.$] Using the Cauchy-Pompeiu formula we obtain
\begin{multline*}
\frac{\zbt}{(1-\zbl\zt)^{2}}=\ibzgwz{\frac{\zbg}{(1-\zbl\zeta)^{2}(\zeta-\zt)}}-\iintzgNOR{\frac{1}{(1-\zbl\zeta)^{2}(\zeta-\zt)}} \\
\\
=\overline{\ibzgwz{\frac{\zeta^{2}}{(\zeta-z)^{2}(1-\zbt\zeta)}}}-\iintzgNOR{\frac{1}{(1-\zbl\zeta)^{2}(\zeta-\zt)}} \\
\\
= \overline{\left(\frac{\zeta^{2}}{1-\zbt\zeta}\right)'_{\zeta}\vert_{\zeta=z}}-\iintzgNOR{\frac{1}{(1-\zbl\zeta)^{2}(\zeta-\zt)}}
\end{multline*}
which implies ~
$
\displaystyle\zbl\iintzgNOR{\frac{1}{(1-\zbl\zeta)^{2}(\zt-\zeta)}}=\frac{\zbl\zbt}{(1-\zbl\zt)^{2}}-\frac{\zbl(2\zbl-\zbl^{2}\zt)}{(1-\zbl\zt)^{2}}.
$
\item[$iii.$]
As ~ $\displaystyle\frac{\zbl(1-\modu{\zeta}^{2})}{\zeta(1-\zbl\zeta)^{2}}=\frac{\zbl}
{(1-\zbl\zeta)\zeta}+\frac{(\zbl-\zbg)}{(1-\zbl\zeta)}\frac{\zbl}{(1-\zbl\zeta)},$ ~
we have
\begin{multline*}
\iintzgNOR
{\frac{1-\modu{\zeta}^{2}}{\zeta}\log(1-\zeta\zbt)\frac{\zbl}{(1-\zbl\zeta)^2}}=\iintzgNOR
{\frac{\zbl\log(1-\zeta\zbt)}{\zeta(1-\zbl\zeta)}}\\
\\
+\iintzgNOR{\frac{(\zbl^{2}-\zbl\zbg)\log(1-\zeta\zbt)}{(1-\zbl\zeta)^{2}}}=\zbl\iintzgNOR
{\frac{\zbl\log(1-\zeta\zbt)}{(1-\zbl\zeta)}}\\
\\
+\zbl\iintzgNOR{\frac{\log(1-\zeta\zbt)}{\zeta}}+\iintzgNOR{\frac{(\zbl^{2}-\zbl\zbg)\log(1-\zeta\zbt)}{(1-\zbl\zeta)^{2}}}.
\end{multline*}
Due the Green-Gauss theorem the second integral of the last equality equals 
$$
\frac{\zbl}{\pi}\int_{\mid\zeta\mid < 1}
{\frac{\log(1-\zeta\zbt)~ d\xi d\eta}{\zeta}} =\frac{\zbl}{2\pi i}\int_{\mid\zeta\mid = 1}
{\frac{\zbg\log(1-\zeta\zbt)~ d\zeta}{\zeta}} 
=\zbl(\log(1-\zeta\zbt)){\zeta}\vert_{\zeta=0}=-\zbl\zbt
$$
and the third one equals
\begin{multline*}
\displaystyle\iintzgNOR
{\frac{(\zbl^{2}-\zbl\zbg)\log(1-\zeta\zbt)}{(1-\zbl\zeta)^{2}}}= \ibzgwz
{\frac{\log(1-\zeta\zbt)}{(1-\zbl\zeta)^{2}}(\zbl^{2}\zbg-\frac{\zbl\zbg^{2}}{2})} \\
\\
=\displaystyle\frac{\zbl^{2}\log(1-\zeta\zbt)}{(1-\zbl\zeta)^{2}}\Bigr\vert_{\zeta=0}-\zbl\left(
\frac{\log(1-\zeta\zbt)}{2(1-\zbl\zeta)^{2}}\right)'_{\zeta}\Biggr\vert_{\zeta=0}=\frac{\zbl\zbt}{2}.
\end{multline*}
Therefore  the identity is satisfied because the first integral is equal to zero.
\item[$iv.$] From the Cauchy integral formula we have
\begin{equation}\label{lemauno41}
\ibzgwz{\frac{\zbg}{\zeta^{3}(\zbg-\zbl)^{2}(1-\zt\zbg)}}=\frac{2\zbl-\zt\zbl^{2}}{(1-\zbl\zt)^{2}}
\end{equation}
and
\begin{equation}\label{lemauno42}
\ibzgwz{\frac{\zeta\zbg^{2}}{2(\zeta-\zt)(1-\zbl\zeta)^{2}}}=\frac{2\zbl}{2(1-\zbl\zt)^{2}}.
\end{equation}
Applying the Cauchy-Pompeiu formula and using (\ref{lemauno41}) and (\ref{lemauno42}) we obtain
\begin{multline*}
\displaystyle\frac{\modu{\zt}^{2}}{\pi}\int_{\mid\zeta\mid < 1}
{\frac{1}{(1-\zbl\zeta)^{2}(\zt-\zeta)}}=\frac{\modu{\zt}^{2}\zt}{(1-\zbl\zt)^{2}}-
\frac{\modu{\zt}^{2}}{2\pi i}\int_{\mid \zeta \mid=1}
{\frac{\zbg}{(1-\zbl\zeta)^{2}(\zeta-\zt)}} \\
\\
=\displaystyle\frac{\modu{\zt}^{2}(\zt-2\zbl+\zt\zbl^{2})}{(1-\zt\zbl)^{2}}
\end{multline*}
and
\begin{multline*}
\displaystyle\iintzgNOR{\frac{\zeta\zbg}{(1-\zbl\zeta)^{2}(\zt-\zeta)}}=\frac{\zt\zbt^{2}}{2(1-\zbl\zt)^{2}}
-\ibzgwz{\frac{\zeta\zbg^{2}}{2(1-\zbl\zeta)^{2}(\zeta-\zt)}} \\
\\
=\displaystyle\frac{\zbt\modu{\zt}^{2}-2\zbl}{2(1-\zt\zbl)^{2}}.
\end{multline*}
Adding the results of the two previous expressions we obtain the desired identity.
\item[$v.$] 
First we consider 
$$
\psi(z)=\ibzgwz{\frac{\zbg^{2}}{2(\zeta-z)}}~~ \mbox{and} ~~ 
\psi^{(k)}(z)=\frac{k!}{2\pi i}\int_{\mid \zeta \mid=1}
{\frac{\zbg^{2}}{2(\zeta-z)^{k+1}}},~ k\in\ZZ.
$$
We observe that $\psi$ is an holomorphic function respect to $z$.
Using the change $\displaystyle\zeta=\exp{i\theta},~ 0\leq\theta\leq 2\pi$, 
we can prove that $\displaystyle\psi^{(k)}(0)=0,\quad
k=0,1,\cdots$, which mean 
$\displaystyle \psi(z)=\sum^{\infty}_{k=0}\frac{\psi^{k}(0)z^{k}}{k!}=0.$
On the other hand, because of Cauchy-Pompeiu formula
\begin{equation*}
\frac{\zbl^{2}}{2}=\ibzgwz{\frac{\zbg^{2}}{2(\zeta-z)}}-\iintzgNOR{\frac{\zbg}{\zeta-z}}=-\iintzgNOR{\frac{\zbg}{\zeta-z}}
\end{equation*}
and if we make $\displaystyle z=0$ we get
\begin{equation*}
\iintzgNOR{\frac{\zbg}{\zeta}}=0.
\end{equation*}
So, we have
\begin{equation*}
\iintzgNOR{\frac{\zbg
z}{\zeta(\zeta-z)}}=\iintzgNOR{\frac{\zbg}{\zeta-z}}-\iintzgNOR{\frac{\zbg}{\zeta}}=-\frac{\zbl^{2}}{2}.
\end{equation*}
\item[$vi.$] After to use the Cauchy-Pompeiu formula
\begin{multline*}
\iintzgNOR{\frac{1}{(\zeta-\zt)(\zeta-z)}}=
\frac{1}{\zt-z}\left[\iintzgNOR{\frac{1}{\zt-\zeta}}+\iintzgNOR{\frac{1}{\zeta-z}}\right]\\
\\
=\frac{1}{\zt-z}\left[
\zbt-\ibzgwz{\frac{\zbg}{\zeta-\zt}}+\ibzgwz{\frac{\zbg}{\zeta-z}}-\zbl
\right]=\frac{\zbt-\zbl}{\zt-z},
\end{multline*}
and if $\displaystyle z=0$ we have $\displaystyle\iintzgNOR{\frac{1}{(\zt-\zeta)\zeta}}=\frac{\zbt}{\zt}.$\\
Then, we can write
\begin{multline*}
\displaystyle\iintzgNOR{\frac{z}{(\zt-\zeta)\zeta(\zeta-z)}}=\iintzgNOR{\frac{1}{(\zt-\zeta)(\zeta-z)}}
- \frac{1}{\pi}\int_{\mid\zeta\mid < 1} \frac{d\xi d\eta}{(\zt-\zeta)\zeta} \\
\\
=\displaystyle\frac{\zbt-\zbl}{\zt-z}-\frac{\zbt}{\zt}.
\end{multline*}
\item[$vii.$]Since 
$$\displaystyle \frac{\sumzgz}{\zeta(\difzgz)}\left( \frac{\modua{\zeta}{2}}{\zeta} \right)=\left( \frac{2}{\difzgz}-\frac{1}{\zeta} \right)
\left(\frac{1}{\zeta}-\zbg\right)=\frac{2}{\zeta(\difzgz)}-\frac{2\zbg}{\difzgz}-\frac{1}{\zeta^{2}}+\frac{\zbg}{\zeta}$$
we have
\begin{multline}\label{4int}
\iintzgDOS{\frac{\sumzgz}{(\difzgz)\zeta}\frac{\modua{\zeta}{2}}{\zeta}\log(1-\zeta\zbt)}
=\iintzgDOS{\frac{2\log(1-\zeta\zbt)}{\zeta(\difzgz)}} \\
\\
-\iintzgDOS{\frac{\log(1-\zeta\zbt)}{\zeta^{2}}}-\iintzgDOS{\frac{2\zbg\log(1-\zeta\zbt)}{\difzgz}} \\
\\
+\iintzgDOS{\frac{\zbg\log(1-\zeta\zbt)}{\zeta}}.
\end{multline}
Solving the four integrals in (\ref{4int}) we obtain the identity.
For the first one we have after de Cauchy-Pompeiu formula
$$
\iintzgNOR{\frac{\log(1-\zeta\zbt)}{\zeta(\difzgz)}}=\ibzgwz{\frac{\zbg\log(1-\zeta\zbt)}{\zeta(\difzgz)}}-\frac{\zbl\log(1-z\zbt)}{z}.
$$
Now we observe that the boundary integral is an holomorphic function respect to $z$ an denote it as $\Psi(z)$, so
$$
\Psi(z)=\ibzgwz{\frac{\zbg\log(1-\zeta\zbt)}{\zeta(\difzgz)}}=\Psi(0)+\sum_{k=1}^{\infty}\frac{\Psi^{k}(0)}{k!}z^{k}.
$$
Now we calculate $\Psi(0)$
$$
\Psi(0)=-\ibzgwz{\frac{\zbg}{\zeta^{2}}\sum_{n=1}^{\infty}\frac{(\zeta\zbt)^{n}}{n}}=-\ibzgwz{\sum_{n=1}^{\infty}\frac{\zeta^{n-3}\zbt^{n}}{n}}.
$$
Taking $\displaystyle \zeta=e^{i\theta}$ we have
$$
\Psi(0)=-\frac{1}{2\pi}\int_{0}^{2\pi}{\sum_{n=1}^{\infty}\frac{e^{i(n-2)\theta}\zbt^{n}}{n}}\;d\theta
=-\frac{\zbt^{2}}{2},
$$
where we have considered the uniform convergence of the series.
The derivatives of $\Psi(z)$ have the form
$$ \Psi^{(k)}(z)=\frac{k!}{2\pi
i}\int_{\modu{z}=1}\frac{\zbg\log(1-\zeta\zbt)}{\zeta(\zeta-z)^{k+1}}\;d\zeta $$
and
\begin{multline*}
\displaystyle\Psi^{(k)}(0)=\frac{k!}{2\pi i}\int_{\modu{z}=1}\frac{\zbg\log(1-\zeta\zbt)}{\zeta^{k+2}}\;d\zeta =
\frac{k!}{2\pi i}\int_{\modu{z}=1}\frac{\log(1-\zeta\zbt)}{\zeta^{k+3}}\;d\zeta \\
\\
= \displaystyle -\frac{k!}{2\pi i}\int_{\modu{z}=1}\sum_{n=1}^{\infty}\frac{(\zeta\zbt)^{n}}{n\zeta^{k+3}}\;d\zeta
= -\frac{k!}{2\pi i}\int_{\modu{z}=1}\sum_{n=1}^{\infty}\frac{\zeta^{n-(k+3)}\zbt^{n}}{n}\;d\zeta
\end{multline*}
and making the change $\zeta=e^{i\theta},\; \theta\in[0,2\pi]$
$$
\Psi^{(k)}(0)=-\frac{k!}{2\pi}\int_{0}^{2\pi}\sum_{n=1}^{\infty}e^{i(n-(k+2))\theta}\frac{\zbt^{n}}{n}\;d\theta=-\frac{k!\zbt^{k+2}}{k+2}.
$$
So we have 
$$
\Psi(z)=-\sum_{k=0}^{\infty}\frac{\zbt^{k+2}}{k+2}z^{k}=\frac{\log(1-z\zbt)}{z^{2}}+\frac{\zbt}{z}.
$$
Therefore we arrive to
\begin{multline*} 
\displaystyle \iintzgNOR{\frac{\log(1-\zeta\zbt)}{\zeta(\difzgz)}}=\frac{\log(1-z\zbt)}{z^{2}}+\frac{\zbt}{z}-\frac{\zbl\log(1-z\zbt)}{z} \\
\\
=\displaystyle\frac{(\modua{z}{2})\log(1-z\zbt)}{z^{2}}+\frac{\zbt}{z}.
\end{multline*}
Observing the former calculation we have for the second integral in (\ref{4int})
$$
\iintzgDOS{\frac{\log(1-\zeta\zbt)}{\zeta^{2}}}=\frac{\Psi(0)}{2}=-\frac{\zbt^{2}}{4}.
$$
For the third and fourth integral in (\ref{4int}) we have
\begin{multline*}
\displaystyle-\iintzgNOR{\frac{\zbg\log(1-\zeta\zbt)}{\difzgz}}=\frac{\zbl^{2}\log(1-z\zbt)}{2}
-\ibzgwz{\frac{\zbg^{2}\log(1-\zeta\zbt)}{2(\difzgz)}}\\
\\
= \displaystyle\frac{\zbl^{2}\log(1-z\zbt)}{2}-\frac{\Psi(z)}{2}
\end{multline*}
and 
\begin{eqnarray*}
\iintzgDOS{\frac{\zbg\log(1-\zeta\zbt)}{\zeta}}=\ibzgwz{\frac{\zbg^{2}\log(1-\zeta\zbt)}{4\zeta}}=\frac{\Psi(0)}{4}=
-\frac{\zbt^{2}}{8}
\end{eqnarray*}
respectively. 

\item[$viii.$] From
\begin{multline*}
\iintzgNOR{\frac{\zbg z}{(\zt-\zeta)(\zeta-z)}}=
\frac{z}{\zt-z}\left[\iintzgNOR{\frac{\zbg}{\zt-\zeta}}+\iintzgNOR{\frac{\zbg}{\zeta-z}}\right] \\
\\
= \frac{z}{\zt-z}\left[\frac{\zbt^{2}-\zbl^{2}}{2} \right]
\end{multline*}
and $(vi)$ of this lemma we get the result.
\item[$ix.$] It follows making
\begin{equation*}
\frac{z}{\zeta(\zeta-z)}\frac{1-\modu{\zeta}^{2}\log(1-\zeta\zbt)}{\zeta}=\left(
\frac{1}{\zeta-z}-\frac{1}{\zeta} \right)\left(
\frac{1}{\zeta}-\zbg \right)\log(1-\zeta\zbt)
\end{equation*}
and applying $(iii)$ of this lemma.
\end{description}
\end{proof}
\begin{teo}\label{teo33}
The Neumann-(Dirichlet-Neumann) problem for the inhomogeneous
tri-analytic equation in the unit disc
\begin{equation*}
\begin{array}{l}
\partial^{3}_{z}\,\omega=f\;in\,\DD,\quad \partial_{\nu}\,\omega =\gamma\;on\,\dD,\quad \omega(0)=c,\\\\
\partial_{\zbl}\,\omega=\gamma_{0}\;on\,\dD,\quad \partial_{\nu}\,\partial_{\zbl}\,\partial_{\zbl}\,\omega=\gamma_{1}\;on\,\dD,\quad \partial_{\zbl}\,\partial_{\zbl}\,\omega(0)=c_{1},
\end{array}
\end{equation*}
for $\displaystyle f \in C^{\alpha}(\overline{\DD}, \CC),\, 0<\alpha<1,\; \gamma,\gam {0},\gam {1} \in C(\dD,\CC),\; c,c_{1}\in \CC,$
is uniquely solvable if and only if for $\displaystyle z\in\DD$,
\begin{equation}\label{eq331}
c-\ibzgwz{\frac{\gamma_{0}(\zeta)}{1-\zbl\zeta}}+\iintzgNOR{f(\zeta)\frac{1-\modu{\zeta}^{2}}{\zeta(1-\zbl\zeta)}}=0,
\end{equation}
\begin{equation}\label{eq332}
\ibzgwz{\frac{(\gamma_{1}(\zeta)-\zbg
f(\zeta))}{\zeta(1-\zbl\zeta)}}+\iintzgNOR{\frac{\zbl
f(\zeta)}{(1-\zbl\zeta)^{2}}}=0,
\end{equation}
and
\begin{eqnarray}\label{eq333}
\ibzgwz{\frac{(\gamma(\zeta)+\zbl\gamma_{0}(\zeta))}{(1-\zbl\zeta)\zeta}}- \ibzgwz{\frac{(\gamma_{1}(\zeta)-\zbg
f(\zeta))\zbl\zbg}{2\zeta}}\nonumber\\
\nonumber\\
+\iintzgNOR{f(\zeta)\left[
\frac{2\zbl^{3}-2\zbl^{2}\zeta+\zbl\modu{\zeta}^{2}}{2(1-\zbl\zeta)^{2}}\right]}=0.
\end{eqnarray}
The Solution then is given by
\begin{multline}\label{eq334}
\omega(z)=c-\frac{c_{1}\zbl^{2}}{2}-\ibzg{\gamma(\zeta)\log(1-z\zbg)}
-\ibzgwz{\gamma_{0}(\zeta)\left[ \frac{\zbg-\zbl}{\zeta-z}-\frac{\zbg}{\zeta} \right]}\\
\\
-\ibzg{(\gamma_{1}(\zeta)-\zbg f(\zeta))\left[ \frac{\zbg}{2z}+\frac{3\zbg^{2}}{4}+\frac{\zbl^{2}}{2}-\frac{\zbl}{z}+\frac{1}{2z^{2}} \right]\log(1-z\zbg)}\\
\\
-\iintzgNOR{f(\zeta)\left[\frac{\zbg(\zbg-\zbl)}{\zeta-z}-\frac{z(\zbg^{2}+\zbl^{2})}{2\zeta(\zeta-z)}-\frac{\zbg^{2}}{\zeta}\right]}
\end{multline}
\end{teo}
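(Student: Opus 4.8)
The plan is to use the iteration process, reducing the tri-analytic equation $\anal{3}\omega=f$ to a second-order (Bitsadze) problem nested inside a first-order (Cauchy--Riemann) problem and then solving each layer with the theorems already quoted. Setting $v=\anal{}\omega$, the equation becomes $\anal{2}v=f$, while the last three boundary conditions read $v=\gam{0}$ on $\dD$, $\partial_{\nu}\anal{}v=\gam{1}$ on $\dD$ and $\anal{}v(0)=c_{1}$. Thus $v$ is precisely the solution of the Dirichlet--Neumann problem for the inhomogeneous Bitsadze equation, so Theorem \ref{teo14} applies to $v$, its internal constant now being $c_{1}$. Having $v$, the function $\omega$ then solves $\anal{}\omega=v$ in $\DD$ with $\partial_{\nu}\omega=\gamma$ on $\dD$ and $\omega(0)=c$, which is the Neumann problem of Theorem \ref{teo13} with inhomogeneity $v$. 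Since each of these two theorems provides unique solvability, the composite problem is uniquely solvable, and the three solvability conditions are the union of the conditions of the two layers.

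First I would read off the two conditions coming from the inner problem. Applying Theorem \ref{teo14} to $v$, its Neumann part (\ref{eq142}) coincides with (\ref{eq332}) once the factor $1/\zeta$ is shifted from the line element $d\zeta/\zeta$ into the integrand, and its Dirichlet part (\ref{eq141}) gives (\ref{eq331}), since the kernel $\usualker$ equals $\frac{1-\modu{\zeta}^{2}}{1-\zbl\zeta}$. These are immediate. Theorem \ref{teo14} also supplies the explicit formula (\ref{eq143}) for $v$, which is a sum of four pieces: a constant $c_{1}\zbg$ term, a boundary $\gam{0}$ term, a boundary term in $\gam{1}-\zbg f$ carrying the factor $\log(1-\zeta\zbt)$, and an area $f$ term.

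The remaining condition (\ref{eq333}) and the solution (\ref{eq334}) both arise by inserting this $v$ into Theorem \ref{teo13}. The third condition is (\ref{eq131}) with $f$ replaced by $v$: interchanging the order of integration (valid because $f\in C^{\alpha}(\overline{\DD})$ and the kernels are integrable) yields iterated integrals whose inner kernels all carry the factor $\frac{\zbl}{(1-\zbl\zeta)^{2}}$, and these are evaluated piece by piece by parts $(i)$--$(iv)$ of Lemma \ref{lema31}; I expect the two constant contributions, one from the $d\zbg$ contour term and one from the $\zbl$-weighted area term of (\ref{eq131}), to cancel, which is why (\ref{eq333}) carries no constant. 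In the same way (\ref{eq334}) is (\ref{eq132}) with $f$ replaced by $v$: the $c_{1}\zbg$ piece is handled by part $(v)$, the boundary $\gam{0}$ piece by part $(vi)$ (producing exactly the term $-\ibzgwz{\gam{0}(\zeta)[\frac{\zbg-\zbl}{\zeta-z}-\frac{\zbg}{\zeta}]}$), the boundary $\gam{1}-\zbg f$ piece by parts $(vii)$ and $(ix)$ (which turn $\log(1-\zeta\zbt)$ into the $\log(1-z\zbg)$ factor of (\ref{eq334})), and the area $f$ piece by part $(viii)$.

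The main obstacle will be exactly this substitution bookkeeping. Because $v$ is itself a sum of boundary and area integrals, feeding it into the operators of (\ref{eq131}) and (\ref{eq132}) creates a large number of double integrals, each of which must be matched to the correct identity of Lemma \ref{lema31}, and the resulting single integrals must then be reassembled and simplified into the compact closed forms (\ref{eq333}) and (\ref{eq334}), keeping careful track of every cancellation. Lemma \ref{lema31} was assembled precisely to provide each of these inner integrals, so once the matching is organized the computation becomes mechanical. Uniqueness of $\omega$ is inherited from the unique solvability of the two iteration steps.
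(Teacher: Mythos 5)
Your proposal follows exactly the paper's own route: the same decomposition into the Neumann problem of Theorem \ref{teo13} for $\omega$ (with inhomogeneity $v=\varphi$) nested around the Dirichlet--Neumann Bitsadze problem of Theorem \ref{teo14} for $v$, with (\ref{eq331}), (\ref{eq332}) read off from (\ref{eq141}), (\ref{eq142}), condition (\ref{eq333}) obtained by substituting the formula for $v$ into (\ref{eq131}) via parts $(i)$--$(iv)$ of Lemma \ref{lema31}, and the solution (\ref{eq334}) obtained by substituting into (\ref{eq132}) via parts $(v)$--$(ix)$. The only detail you leave implicit, which the paper states, is that a few additional boundary integrals arising in the substitution vanish or are evaluated directly by the Cauchy integral formula; otherwise the argument is the same.
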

\begin{proof}
The given system is converted into the following two boundary problems:
\begin{equation*}
\begin{array}{ll}
\partial_{\zbl}\,\omega=\varphi ~~ \mbox{in}~~ \DD,\quad \partial_{\nu}\,\omega =\gamma ~~ \mbox{on} ~~ \dD,\quad \omega(0)=c,\\\\
\partial_{\zbl}\,\partial_{\zbl}\,\varphi=f\; ~~ \mbox{in} ~~ \DD,\quad \varphi=\gamma_{0} ~~ \mbox{on} ~~ \dD,\quad \partial_{\nu}\,(\partial_{\zbl}\,\varphi)=\gamma_{1} ~~ \mbox{on} ~~ \dD,\quad \partial_{\zbl}\,\omega(0)=c_{1}.
\end{array}
\end{equation*}
So using Theorem \ref{teo13}, $\omega$ is
\begin{equation}\label{eq335}
\omega(z)=c-\ibzg{(\gamma(\zeta)-\zbg\varphi(\zeta))\log(1-z\zbg)}
-\iintzgNOR{\frac{z\varphi(\zeta)}{\zeta(\zeta-z)}}
\end{equation}
if and only if (\ref{eq131}) is satisfied with $\varphi$ instead of $f$,
and by Theorem \ref{teo14} $\varphi$ is
\begin{multline}\label{eq336}
\varphi(\zeta)=c_{1}\zbl+\ibzgwz{\frac{\gamma_{0}(\zeta)}{\zeta-z}}
+\ibzg{(\gamma_{1}(\zeta)-\zbg f(\zeta))\frac{1-\modu{z}^{2}}{z}\log(1-z\zbg)} \\
\\
+\iintzgNOR{f(\zeta)\frac{\modu{\zeta}^{2}-\modu{z}^{2}}{\zeta(\zeta-z)}}
\end{multline}
under the solvability condition (\ref{eq141}) and (\ref{eq142}) with $c_{1}$ instead $c.$ 
Now we consider (\ref{eq131}) with $\varphi$ instead of $f$:
\begin{equation}\label{eq337}
\ibzgwz{\frac{\gamma(\zeta)-\zbg\varphi(\zeta)}{(1-\zbl\zeta)\zeta}}+\iintzgNOR{\frac{\zbl\varphi(\zeta)}{(1-\zbl\zeta)^{2}}}=0.
\end{equation}
Substituting the expression for $\varphi$ into (\ref{eq337}) we have
\begin{multline*}
\ibzg{\frac{\gamma(\zeta)}{(1-\zbl\zeta)}}-c_{1}\ibzgwz{\frac{\zbg^{2}}{\zeta(1-\zbl\zeta)}}\\
\\
-\ibztwz{\gamma_{0}(\zt)\left[  \ibzg{\frac{\zbg}{(\zt-\zeta)(1-\zbl\zeta)}} \right]}\\
\\
-\iintztNOR{\frac{f(\zt)}{\zt}\left[ \ibzgwz{\frac{\modu{\zt}^{2}-\modu{\zeta}^{2}}{\zt-\zeta}\frac{\zbg}{\zeta(1-\zbl\zeta)}}  \right]}\\
\\
+c_{1}\iintzgNOR{\frac{\zbg\zbl}{(1-\zbl\zeta)^2}}
+\ibztwz{\gamma_{0}(\zt)\left[\iintzgNOR{\frac{\zbl}{(\zt-\zeta)(1-\zbl\zeta)^2}}\right]}
\end{multline*}
\begin{multline*}
+\ibzt{(\gamma_{1}(\zt)-\zbt
f(\zt))\left[\iintzgNOR{\frac{(1-\modu{\zeta}^{2})\zbl\log(1-\zeta\zbt)}{\zeta(1-\zbl\zeta)^{2}}}\right]}\\
\\
+\iintztNOR{\frac{f(\zt)}{\zt}\left[\iintzgNOR{\frac{\zbl(\modu{\zt}-\modu{\zeta}^{2})}{(\zt-\zeta)(1-\zbl\zeta)^{2}}}\right]}=0,
\end{multline*}
for $\displaystyle{\zt = \tilde\xi +i \etat}.$
In order to obtain (\ref{eq333}) we use $(i) - (iv)$ of Lemma \ref{lema31} and
$$
\ibzg{\frac{\zbt^{2}}{1-\zbl\zeta}}=\zbl^{2}, ~ 
\ibzg{\frac{\zbt}{(\zt-\zeta)(1-\zbl\zeta)}} = -\frac{\zbl^{2}}{1-\zbl\zt}
$$
and
$$
\ibzg{\frac{\modu{\zt}^{2}-\modu{\zeta}^{2}}{(\zt-\zeta)}\frac{\zbg}{(1-\zbl\zeta)}}=\frac{\zbl^{2}(1-\modu{\zt}^{2})}{1-z\zt}.
$$
which are calculated by applying of Cauchy integral formula.

In order to obtain (\ref{eq334}) we carry (\ref{eq336}) to (\ref{eq335}) having
\begin{multline*}
\omega(z)=c-\ibzg{\gamma(\zeta)\log(1-z\zbg)}-c_{1}\iintzgNOR{\frac{z\zbg}{\zeta(\zeta-z)}}\\
\\
-\ibztwz{\gamma_{0}(\zt)\left[\iintzgNOR{\frac{z}{(\zt-\zeta)\zeta(\zeta-z)}}\right]}\\
\\
- \frac{1}{2 \pi i} \int_{\mid \zt \mid=1}(\gamma_{1}(\zt)-\zbt f(\zt))
\left[\frac{1}{\pi}\int_{\mid \zeta \mid < 1}
\frac{(1-\modu{\zeta}^{2})z\log(1-\zeta\zbt)}{\zeta^{2}(\zeta-z)}d\xi d\eta \right]\frac{d\zt}{\zt} \\
\\
-\iintztNOR{\frac{f(\zt)}{\zt}\left[\iintzgNOR{\frac{(\modu{\zt}^{2}-\modu{\zeta}^{2})z}{(\zt-\zeta)\zeta(\zeta-z)}}\right]} 
+ c_{1}\ibzg{\zbg^{2}\log(1-z\zbg)}\\
\\
+ \ibztwz{\gamma_{0}(\zt) \left[\ibzg{\frac{\zbg\log(1-z\zbg)}{\zt-\zeta}}\right]}\\
\\
+\iintztNOR{\frac{f(\zt)}{\zt}\left[\ibzg{\frac{(\modu{\zt}^{2}-\modu{\zeta}^{2})\bar{\zeta} \log(1-z\bar{\zeta})}{\zt-\zeta}}\right]}.
\end{multline*}
Taking into account that the integrals
$
\displaystyle\ibzg{\zbg^{2}\log(1-z\zbg)}$, 
$$
\ibzg{\frac{\zbg\log(1-z\zbg)}{\zt-\zeta}}, ~~ \mbox{and} ~~
\ibzg{\frac{(\modu{\zt}^{2}-\modu{\zeta}^{2})\zbg\log(1-z\zbg)}{\zt-\zeta}}
$$
are all equal to zero because of the Cauchy integral formula and
using ${\it (v)-(ix)}$ of Lemma \ref{lema31} 
we get the solution (\ref{eq334}).
\end{proof}

\section{The (Dirichlet-Neumann)-Dirichlet problem}

Now we will study the combined problem (Dirichlet-Neumann)-Dirichlet.
As we did in the former problem, we will prove some identities.
\begin{lem}\label{lema34}
For $\displaystyle \modu{z}<1$ and $\displaystyle \modu{\zt}<1$ we have
\end{lem}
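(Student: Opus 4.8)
The plan is to prove each identity in the list separately, using exactly the toolkit that established Lemma~\ref{lema31}: the Green-Gauss theorem to convert an area integral into a boundary integral whenever the integrand has an explicit $\zbg$-antiderivative, the Cauchy-Pompeiu formula to peel off the interior singularities at $\zeta=z$ and $\zeta=\zt$, the Cauchy integral formula together with its derivatives to evaluate the resulting holomorphic boundary integrals, and termwise integration of the logarithmic series for the pieces carrying $\log(1-\zeta\zbt)$. Since the (Dirichlet-Neumann)-Dirichlet iteration feeds the Dirichlet solution of Theorem~\ref{teo12} into the Dirichlet-Neumann solution of Theorem~\ref{teo14}, the kernels to be integrated here are built from the Dirichlet-Neumann kernel $(\modu{\zeta}^{2}-\modu{z}^{2})/\difzgz$ multiplied by simple Cauchy kernels $1/\zeta$, $1/\difzgz$ and $1/\difzgzt$; these products are precisely the model integrals the lemma records.

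For the purely rational identities I would first run a partial fraction decomposition in $\zeta$, splitting each integrand into terms $\zbg^{k}/(\zeta-a)^{m}$ with $a\in\{0,z,\zt\}$, just as in parts $(v)$, $(vi)$ and $(viii)$ of Lemma~\ref{lema31}. Each summand has an elementary antiderivative in $\zbg$, so when there is no interior pole the Green-Gauss theorem turns $\iintzgNOR{\cdot}$ directly into $\ibzgwz{\cdot}$ (or $\ibzgwzb{\cdot}$), while a pole at an interior point is handled by the Cauchy-Pompeiu formula, which contributes that boundary integral plus a point evaluation. The remaining boundary integrals are holomorphic (or anti-holomorphic) in the free variables and collapse to derivative evaluations by the Cauchy integral formula, after which one recombines the pieces over the common denominators $(1-\zbl\zt)$ or $(\zt-z)$ to reach the stated closed forms.

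For the identities carrying $\log(1-\zeta\zbt)$ I would expand $\log(1-\zeta\zbt)=-\sum_{n\geq1}(\zeta\zbt)^{n}/n$, parametrize the unit circle by $\zeta=e^{i\theta}$, and integrate term by term, the interchange being justified by the uniform convergence of the series on $\modu{\zeta}=1$ exactly as in parts $(iii)$, $(vii)$ and $(ix)$ of Lemma~\ref{lema31}. Only the terms whose net $\zeta$-power is $-1$ survive the angular integration; resumming the survivors reconstitutes a logarithm in $z$ (or $\zt$) plus a rational remainder. To keep these manageable I would first apply the algebraic reduction used in part $(iii)$, splitting $(1-\modu{\zeta}^{2})/\zeta$ so that the logarithm multiplies only elementary rational kernels before the series is introduced.

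The hard part will be the logarithmic identities. There the bookkeeping of which Laurent coefficient survives the boundary integration, and the correct resummation of the resulting series back into a closed logarithm-plus-rational expression, is error-prone; one must carefully justify the interchange of summation and integration and track the interior evaluations produced by Cauchy-Pompeiu. The rational identities are, by comparison, routine once the partial fractions are written out.
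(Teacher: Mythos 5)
Your plan matches the paper's own proof: parts $(i)$ and $(iii)$ are established exactly as you propose, by partial-fraction splittings of the kernels (e.g.\ $\frac{1}{\zeta(1-\zbl\zeta)(\difztzg)}=\frac{1}{\zt(1-\zbl\zeta)}\bigl[\frac{1}{\zeta}+\frac{1}{\difztzg}\bigr]$) followed by Cauchy-Pompeiu and Cauchy integral formula evaluations, with the paper citing parts $(v)$ and $(vi)$ of Lemma~\ref{lema31} for $(iii)$ where you would redo the model computation. The sole logarithmic identity, part $(ii)$, is even easier than you anticipate: the factor $(\modua{z}{2})/z$ is constant in $\zeta$, so the paper dispatches it in one line by Cauchy's theorem, although your termwise integration of the expanded logarithm would reach the same conclusion.
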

\begin{description}
\item[$i.$] $\displaystyle \iintzgNOR{\frac{\modua{\zeta}{2}}{\zeta(1-\bar{z}\zeta)(\zt-z)}}=\frac{1}{2\zt}\frac{2\zbt-\zt(\zbt^{2}+\zbl^{2})}
{(1-\bar{z}\zt)}.$
\item[$ii.$]$\displaystyle
\ibzg{\frac{\zbg}{\zt-\zeta}\frac{\modua{z}{2}}{z}\log(1-z\zbg)}=0.$
\item[$iii.$]$\displaystyle \iintzgNOR{\frac{\difmodu{\zeta}{2}{z}{2}}{\zeta(\difzgz)(\difztzg)}}
=\frac{1}{2\zt}\frac{\zbt(\modu{\zt}^{2}-2\modu{z}^{2})+\zbl^{2}(2-\zt)}{\difztz}.$
\end{description}
\begin{proof}
\begin{description}
\item[$i.$]Since ~ $\displaystyle \frac{1}{\zeta(1-\zbl\zeta)(\difztzg)}=\frac{1}{\zt(1-\zbl\zeta)}\left[\frac{1}{\zeta}+\frac{1}{\difztzg}\right]$
~ then
\begin{multline*}
\iintzgNOR{\frac{\modua{\zeta}{2}}{\zeta(1-\zbl\zeta)(\difztzg)}}=\iintzgNOR{\frac{1}{\zt\zeta(1-\zbl\zeta)}}\\
\\
+\iintzgNOR{\frac{1}{\zt(1-\zbl\zeta)(\difztzg)}}
-\iintzgNOR{\frac{\zbg}{(1-\zbl\zeta)(\difztzg)}}.
\end{multline*}
After applying Cauchy Pompeiu formula we have
\begin{multline*}
-\iintzgNOR{\frac{1}{(1-\zbl\zeta)(\difzgzt)}}=\frac{\zbt}{1-\zbl\zt}-\ibzgwz{\frac{\zbg}{(1-\zbl\zeta)(\difzgzt)}}\\
\\
=\frac{\zbt}{1-\zbl\zt}
-\overline{\ibzgwz{\frac{\zeta}{(\difzgz)(1-\zbt\zeta)}}}=\frac{\zbt}{1-\zbl\zt}-\frac{\zbl}{1-\zbl\zt}=\frac{\zbt-\zbl}{1-\zbl\zt}
\end{multline*}
and if we take $\displaystyle \zt=0$ we obtaing $\displaystyle
\iintzgNOR{\frac{1}{(1-\zbl\zeta)\zeta}}= \bar{z}.$ On the other hand
\begin{multline*}
\iintzgNOR{\frac{\zbg}{(1-\zbl\zeta)(\difzgzt)}}=
\ibzgwz{\frac{\zbg^{2}}{2(1-\zbl\zeta)(\difzgzt)}}-\frac{\zbt^{2}}{2(1-\zbl\zeta)}\\
\\
=\overline{\ibzgwz{\frac{\zeta^{2}}{2(\difzgz)(1-\zbt\zeta)}}}-\frac{\zbt^{2}}{2(1-\zbl\zeta)}=
\frac{\zbl^{2}-\zbt^{2}}{2(1-\zbl\zt)}.
\end{multline*}
\item[$ii.$] Follows from the Cauchy's theorem.
\item[$iii.$]
$$
\frac{1}{\pi}\int_{\mid\zeta\mid < 1}{\frac{(\difmodu{\zeta}{2}{z}{2})~~ d\xi d\eta}{\zeta(\difzgz)(\difztzg)}}=
\frac{1}{\pi}\int_{\mid\zeta\mid < 1}{\frac{\zbg ~~ d\xi d\eta}{(\difzgz)(\difztzg)}}
- \frac{\zbl}{\pi}\int_{\mid\zeta\mid < 1}{\frac{z ~~ d\xi d\eta}{\zeta(\difzgz)(\difztzg)}}.
$$
Since ~ $\displaystyle
\frac{\zbg}{(\difzgz)(\difztzg)}=\frac{\zbg}{\difztz}\left(\frac{1}{\difzgz}+\frac{1}{\difztzg}\right)$ ~ we have
\begin{eqnarray*}
\frac{1}{\pi}\int_{\mid\zeta\mid < 1}{\frac{\zbg ~ d\xi d\eta}{(\difzgz)(\difztzg)}}=
\frac{1}{\difztz}\frac{1}{\pi}\int_{\mid\zeta\mid < 1}{\frac{\zbg ~~ d\xi d\eta}{\difzgz}}
+\frac{1}{\difztz}\frac{1}{\pi}\int_{\mid\zeta\mid < 1}{\frac{\zbg ~~ d\xi d\eta}{\difztzg}}\\
\\
= \frac{1}{\difztz}\left(\frac{-\zbl^{2}}{2}+\frac{\zbt^{2}}{2}\right)
\end{eqnarray*}
where we used the proof of $(v)$ in Lemma \ref{lema31}. On the other side using 
$(vi)$ of Lemma \ref{lema31} we have ~
$
\displaystyle \zbl\iintzgNOR{\frac{z}{\zeta(\difzgz)(\difztzg)}}=\zbl\left(\frac{\zbt-\zbl}{\difztz}-\frac{\zbt}{\zt}\right).
$
\end{description}
\end{proof}
\begin{teo}
The (Dirichlet-Neumann)-Dirichlet problem for the inhomogeneous
tri-analytic equation in the unit disc
\begin{equation*}
\begin{array}{l}
\partial^{3}_{z}\,\omega=f ~~\mbox{in}~~ \DD,\quad \omega =
\gamma_{0}~~ \mbox{on} ~~\dD,\quad \partial_{\nu}\,\partial_{\zbl}\,\omega=\gamma_{1}~~ \mbox{on} ~~\dD,\quad\\
\partial^{2}_{\zbl}\,\omega=\gamma~~ \mbox{on} ~~\dD,\quad \partial_{\zbl}\,\omega(0)= c.
\end{array}
\end{equation*}
for $\displaystyle f \in L_{1}(\overline{\DD}, \CC)\cap
C(\overline{\dD}, \CC),\; \gamma,\gam {0},\gam {1} \in
C(\dD,\CC),\; c\in \CC,$ is uniquely solvable if and only if for
$\displaystyle z\in\DD$,
\begin{equation}\label{eq351}
\ibzgwz{\gamma(\zeta)\frac{\zbl}{1-\zbl\zeta}}=\iintzgNOR{f(\zeta)\frac{\zbl}{1-\zbl\zeta}},
\end{equation}
\begin{eqnarray}\label{eq352}
c-\ibzgwz{\gamma_{0}(\zeta)\frac{1}{1-\zbl\zeta}}+
\frac{1}{4\pi i}\int_{\modu{\zeta}=1}\frac{\gamma{(\zeta})}{\zeta}\frac{[2\zbg-\zeta(\zbg^{2}+\zbl^{2})]}{1-\zbl\zeta}\;d\xi\nonumber \\
\nonumber \\
-\frac{1}{2\pi}\int_{\modu{\zeta}<1}\frac{f{(\zeta})}{\zeta}\frac{[2\zbg-\zeta(\zbg^{2}+\zbl^{2})]}{1-\zbl\zeta}\;
d\xi d\eta=0
\end{eqnarray}
and
\begin{equation}\label{eq353}
\ibzgwz{\gamma_{1}(\zeta) +\zbl\gamma(\zeta)\frac{1}{\zeta(1-\zbl\zeta)}}-\iintzgNOR{f(\zeta)\frac{\zbl(\zbg-\zbl)}{(1-\zbl\zeta)^{2}}}=0.
\end{equation}
The solution then is given by
\begin{multline}\label{eq354}
\omega(z)=c\zbl+\ibzgwz{\frac{\gamma_{0}(\zeta)}{\difzgz}}+\ibzg{\gamma_{1}(\zeta)\frac{\modua{z}{2}}{z}\log(1-z\zbg)}\\
\\
+\ibzgcuatro{\gamma(\zeta)\left[\frac{\zbg(1-2\modu{z}^{2})+\zbl^{2}(2-\zeta)}{\difzgz}\right]}\\
\\
-\iintzgDOS{f(\zeta)\left[\frac{\zbg(\modu{\zeta}^{2}-2\modu{z}^{2})+\zbl^{2}(2-\zeta)}{\zeta(\difzgz)}\right]}.
\end{multline}
\end{teo}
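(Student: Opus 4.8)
The plan is to reproduce the iteration used for Theorem~\ref{teo33}, but now peeling off the \emph{top} derivative rather than the bottom one. Put $\psi:=\partial^{2}_{\zbl}\omega$. Then the tri-analytic equation $\partial^{3}_{\zbl}\omega=f$ together with the datum $\partial^{2}_{\zbl}\omega=\gamma$ on $\dD$ is a first order Dirichlet problem for $\psi$,
\[
\partial_{\zbl}\psi=f\ \text{in}\ \DD,\qquad \psi=\gamma\ \text{on}\ \dD,
\]
whereas the data on $\omega$ and $\partial_{\zbl}\omega$ turn the relation $\partial^{2}_{\zbl}\omega=\psi$ into a Dirichlet--Neumann (Bitsadze) problem,
\[
\partial^{2}_{\zbl}\omega=\psi\ \text{in}\ \DD,\quad \omega=\gamma_{0},\ \ \partial_{\nu}\partial_{\zbl}\omega=\gamma_{1}\ \text{on}\ \dD,\quad \partial_{\zbl}\omega(0)=c .
\]
This is the meaning of the name: the conditions on the pair $(\omega,\partial_{\zbl}\omega)$ form the Dirichlet--Neumann block, and the condition on $\partial^{2}_{\zbl}\omega$ is the outer Dirichlet one.

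First I would solve the $\psi$-problem by Theorem~\ref{teo12}. Its solvability condition (\ref{eq121}) for the data $(\gamma,f)$ is exactly (\ref{eq351}), and its unique solution (\ref{eq122}) is
\[
\psi(\zeta)=\ibztwz{\frac{\gamma(\zt)}{\zt-\zeta}}-\iintztNOR{\frac{f(\zt)}{\zt-\zeta}} .
\]
Then I would insert this $\psi$ into Theorem~\ref{teo14}, with $\psi$ playing the role of the right-hand side: its two solvability conditions (\ref{eq141}),(\ref{eq142}) must reduce to (\ref{eq352}),(\ref{eq353}) and its representation (\ref{eq143}) to (\ref{eq354}). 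Uniqueness is then inherited from the uniqueness asserted at each of the two stages.

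The substance is the evaluation of the iterated integrals so produced, and this is exactly what Lemmas~\ref{lema31} and~\ref{lema34} are designed for. After inserting $\psi$ and using Fubini, every term becomes an inner $\zeta$-integral of the Cauchy kernel $1/(\zt-\zeta)$ against one of the weights of Theorem~\ref{teo14}. The weight $\tfrac{1-\modu{\zeta}^{2}}{\zeta(1-\zbl\zeta)}$ coming from (\ref{eq141}) is evaluated by Lemma~\ref{lema34}$(i)$, whose prefactor $\tfrac{1}{2\zt}$ is precisely what converts $\tfrac{1}{2\pi i},\tfrac{1}{\pi}$ into the $\tfrac{1}{4\pi i},\tfrac{1}{2\pi}$ of (\ref{eq352}). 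In (\ref{eq142}) the area weight $\tfrac{\zbl}{(1-\zbl\zeta)^{2}}$ is handled by Lemma~\ref{lema31}$(ii)$, while the boundary weight $\tfrac{\zbg}{1-\zbl\zeta}$ I would treat as in the proof of Theorem~\ref{teo33}: not by replacing $\psi$ by its boundary value $\gamma$, but by substituting the full representation and computing the inner contour integral by Cauchy's formula (using $\zbg=1/\zeta$ on $\dD$), which yields clean values such as $\ibzg{\frac{\zbg}{(\zt-\zeta)(1-\zbl\zeta)}}=-\frac{\zbl^{2}}{1-\zbl\zt}$ and makes the boundary and area pieces combine into (\ref{eq353}). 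For the solution, the area weight $\tfrac{\modu{\zeta}^{2}-\modu{z}^{2}}{\zeta(\zeta-z)}$ of (\ref{eq143}) is evaluated by Lemma~\ref{lema34}$(iii)$, which delivers both the $\gamma$- and $f$-terms of (\ref{eq354}); Lemma~\ref{lema34}$(ii)$ then shows that the whole $\psi$-contribution to the logarithmic boundary term is zero, so that only $\gamma_{1}$ survives in front of $\log(1-z\zbg)$.

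The step I expect to be the main obstacle is precisely this reduction. Since $\psi$ is itself a double integral, each functional of Theorem~\ref{teo14} becomes an iterated singular integral, and the delicate point is the treatment of the boundary integrals of $\psi$: a naive use of the Dirichlet relation $\psi|_{\dD}=\gamma$ gives the wrong answer because it discards the Plemelj jump of the Cauchy part, so one must carry out the inner contour integrations exactly. Once these inner integrals are computed and the surviving boundary and area pieces are matched against the compact right-hand sides of (\ref{eq352})--(\ref{eq354}), the theorem follows.
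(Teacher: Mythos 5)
Your proposal follows essentially the same route as the paper: the identical decomposition into the Bitsadze (Dirichlet--Neumann) problem for $\omega$ with right-hand side $\varphi=\partial^{2}_{\zbl}\omega$ plus the first-order Dirichlet problem $\partial_{\zbl}\varphi=f$, $\varphi=\gamma$ on $\dD$, solved via Theorems \ref{teo14} and \ref{teo12} respectively, with the iterated integrals reduced by Lemma \ref{lema34}$(i)$--$(iii)$ and Lemma \ref{lema31}$(ii)$, $(vi)$ exactly as the paper does. The only quibble is your side claim that using $\varphi|_{\dD}=\gamma$ in the boundary terms would be \emph{wrong}: under the solvability condition (\ref{eq351}) the representation (\ref{eq122}) does attain the boundary value $\gamma$, so that substitution is legitimate and would merely produce an equivalent but differently arranged condition; the paper, like you, simply substitutes the full representation instead.
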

\begin{proof}
The problem is discomposed into the system
\begin{equation}\label{eq355}
\partial_{\zbl}\partial_{\zbl}\,\omega=\varphi ~~ \mbox{in} ~~ \DD,\quad \omega =\gamma_{0} ~~ \mbox{on} ~~ \dD,\quad \partial_{\nu}\,\partial_{\zbl}\,\omega=\gamma_{1}, ~~ \mbox{on} ~~ \dD,\quad \partial_{\zbl}\,\omega(0)=c
\end{equation}
and
\begin{equation}\label{eq356}
\quad \partial_{\zbl}\,\varphi=f ~~ \mbox{in} ~~ \DD,\quad
\varphi=\gamma ~~ \mbox{on} ~~ \dD .
\end{equation}
By Theorem \ref{teo14}, the solution of (\ref{eq355}) is (\ref{eq143}) with $\varphi$ instead 
of $f$ under the soluability conditions (\ref{eq141}) and (\ref{eq142}) again with $\varphi$ instead of $f$.
On the other hand, the solution of (\ref{eq356}) is given by (\ref{eq122}) restricted to the condition (\ref{eq121}).
Substituting (\ref{eq122}) with $\varphi$ instead of $\omega$ in the
solution of (\ref{eq355}) we obtain
\begin{multline*}
\omega(z)=c\zbl+\ibzg{\frac{\gamma_{0}(\zeta)}{\difzgz}}+\ibzg{\gamma_{1}(\zeta)\frac{\modua{z}{2}}{z}\log(1-z\zbg)}\\
\\
-\ibztwz{\gamma(\zt)\left[\ibzg{\frac{\zbg}{\difztzg}\frac{\modua{z}{2}}{z}\log(1-z\zbg)} \right]}\\
\\
+\iintztNOR{f(\zt)\left[\ibzg{\frac{\zbg}{\difztzg}\frac{\modua{z}{2}}{z}\log(1-z\zbg)} \right]}\\
\\
+\ibztwz{\gamma(\zt)\left[ \iintzgNOR{\frac{\difmodu{\zeta}{2}{z}{2}}{\zeta(\difzgz)(\difztzg)}}\right]}\\
\\
-\iintztNOR{f(\zt)\left[\iintzgNOR{\frac{\difmodu{\zeta}{2}{z}{2}}{\zeta(\difzgz)(\difztzg)}}\right]}
\end{multline*}
After $\displaystyle\ibzg{\frac{\zbg\log(1-z\zbg)}{\zt-\zeta}}=0$
and $(iii)$ of Lemma \ref{lema34}, we have (\ref{eq354}).
In order to prove (\ref{eq352}) and (\ref{eq353}), we substitute
(\ref{eq122}) with $\varphi$ instead of $\omega$ in (\ref{eq141}) and
(\ref{eq142}) where we have taken $\varphi$ instead of $f$. It
yields,
\begin{multline*}
c-\ibzgwz{\frac{\gamma_{0}(\zeta)}{1-\zbl\zeta}}+\ibztwz{\gamma(\zt)
\left[\iintzgNOR{\frac{\modua{\zeta}{2}}{\zeta(1-\zbl\zeta)(\difztzg)}}\right]}\\
\\
-\iintztNOR{f(\zt)\left[\iintzgNOR{\frac{\modua{\zeta}{2}}{\zeta(1-\zbl\zeta)(\difztzg)}}
\right]}=0
\end{multline*}
and
\begin{multline*}
0 = \ibzgwz{\frac{\gamma_{1}(\zeta)}{\zeta(1-\zbl\zeta)}} \\
\\
-\frac{1}{2\pi i}\int_{|\zt|=1}\gamma(\zt)\left[\ibzg{\frac{\zbg}{(\difztzg)(1-\zbl\zeta)}}\right.
\left.-\iintzgNOR{\frac{\zbl}{(\difztzg)(1-\zbl\zeta)^{2}}}\right]\; d\zt 
\end{multline*}
\begin{multline*}
+ \frac{1}{\pi}\int_{|\zt|<1}f(\zt)\left[ \ibzg{\frac{\zbg}{(\difztzg)(1-\zbl\zeta)}}  \right.
\left. - \iintzgNOR{\frac{\zbl}{(\difztzg)(1-\zbl\zeta)^{2}}} \right]
\;d\tilde{\xi}d\tilde{\eta}.
\end{multline*}
Using $(i)$ of Lemma \ref{lema34} and $(ii)$, $(vi)$ of Lemma
\ref{lema31} we get the solvability conditions for this problem.
\end{proof}

\bob
The combined boundary value value problems studied in this paper can be generalized to 
the following combined problems for the nonhomogeneous poly-analytic equation:
$k$-Neumann-($m$-Dirichlet-$n$-Neumann),
($m$-Dirichlet-$n$-Neumann)-$k$-Dirichlet,
$k$-Dirichlet-($n$-Neumman-$m$-Dirichlet) and
($n$-Neumann-$m$-Dirichlet)-$k$-Neumann which extend the cases
treated in \cite{h4}.
\eob

\end{document}